\theoremstyle{plain}
\newtheorem{thm}{Theorem}[section]
\newtheorem{theorem}[thm]{Theorem}
\theoremstyle{definition}
\numberwithin{equation}{section}
 \title{Asymptotic behaviour of learning rates in Armijo's condition}
 \author{Tuyen Trung Truong}
   \address{Department of Mathematics, University of Oslo, Blindern 0851 Oslo, Norway}
  \email{tuyentt@math.uio.no}
 \author{Tuan Hang Nguyen}
\address{Axon AI Research}
\email{hnguyen@axon.com}
    \date{\today}
    \keywords{}
   \subjclass[2010]{}
\begin{document}
\maketitle

\begin{abstract}

Fix a constant $0<\alpha <1$. For a $C^1$ function $f:\mathbb{R}^k\rightarrow \mathbb{R}$, a point $x$ and a positive number $\delta >0$, we say that Armijo's condition is satisfied if $f(x-\delta \nabla f(x))-f(x)\leq -\alpha \delta ||\nabla f(x)||^2$. It is a basis for the well known Backtracking Gradient Descent (Backtracking GD) algorithm. 

Consider a sequence $\{x_n\}$ defined by $x_{n+1}=x_n-\delta _n\nabla f(x_n)$, for positive numbers $\delta _n$ for which Armijo's condition is satisfied. We show that if $\{x_n\}$ converges to a non-degenerate critical point, then $\{\delta _n\}$ must be bounded. Moreover this boundedness can be quantified in terms of the norms of the Hessian $\nabla ^2f$ and its inverse at the limit point. This complements the first author's results on Unbounded Backtracking GD, and shows that in case of convergence to a non-degenerate critical point the behaviour of Unbounded Backtracking GD is not too different from that of usual Backtracking GD. On the other hand, in case of convergence to a degenerate critical point the behaviours can be very much different. We run some experiments to illustrate that both scenrios can really happen.  

In another part of the paper, we argue that Backtracking GD has the correct unit (according to a definition by Zeiler in his Adadelta's paper). The main point is that since learning rate in Backtracking GD is bound by Armijo's condition, it is not unitless.      

\end{abstract}


\subsection{Asymptotic behaviour of learning rates in Armijo's condition}

Fix a constant $0<\alpha <1$. For a $C^1$ function $f:\mathbb{R}^k\rightarrow \mathbb{R}$, a point $x$ and a positive number $\delta >0$, we say that Armijo's condition is satisfied if $f(x-\delta \nabla f(x))-f(x)\leq -\alpha \delta ||\nabla f(x)||^2$. We say that a sequence $\{x_n\}$ satisfies Armijo's condition \cite{armijo} if $x_{n+1}=x_n-\delta _n\nabla f(x_n)$ for some positive number $\delta _n$ for which Armijo's condition is satisfied. 

In Backtracking GD, one fixes a countable set $\Delta$ of positive numbers converging to $0$, starts from a random  initial point  $x_0$ and defines $x_{n+1}=x_n-\delta (x_n)\nabla f(x_n)$, where $\delta (x_n)\in \Delta $ is the largest number for which Armijo's condition is satisfied. Convergence guarantee for Backtracking GD and modifications is currently the best among all iterative methods \cite{truong-nguyen} with associated python codes for experiments on CIFAR datasets \cite{mbtoptimizer}. A popular choice for the set $\Delta $ is as follows: we choose $0<\beta <1$ and $\delta _0>0$ and define $\Delta =\{\beta ^n\delta _0:~n=0,1,2,\ldots \}$

A drawback in Backtracking GD is that the learning rates are bounded from above by $\max \Delta $. If one could allow learning rates in Backtracking GD to be unbounded, then the convergence could be faster and could avoid bad critical points. To this end, the first author defined in \cite{truong} the Unbounded Backtracking GD procedure, where now learning rates $\delta _n$ are not bounded by $\max \Delta$ but are allowed to grow provided $\lim _{n\rightarrow\infty}\delta _n||\nabla f(x_n)||=0$. Under this condition, one obtains the same convergence guarantee as in Backtracking GD. 

If the sequence $\{x_n\}$ satisfies Armijo's condition and converges, then the above condition $\lim _{n\rightarrow\infty}\delta _n||\nabla f(x_n)||=0$ is satisfied. 
The goal of numerical optimisation is to guarantee convergence to local minima, and hence at least to critical points of $f$. 

Recall that a critical point of $f$ is non-degenerate at a critical point $x_{\infty}$ if $f$ is $C^2$ near $x_{\infty}$ and the Hessian $\nabla ^2f(x_{\infty})$ is invertible. Note that non-degenerate critical points are "generic", in the sense that a randomly chosen function $f$ will have all its critical points to be non-degenerate (for example, Morse's functions). The above discussion motivates us to investigate the question: Can we allow the sequence $\delta _n$ grow to infinity while having the sequence $\{x_n\}$ converge to a non-degenerate critical point? A bit surprisingly, the answer is No, as seen from the next result. 

\begin{theorem} Assume that the sequence $\{x_n\}$ satisfies Armijo's condition and converges to a non-degenerate critical point $x_{\infty}$. To avoid triviality, we assume moreover that $\nabla f(x_n)\not= 0$ for all $n$.  Then for every $\epsilon >0$, there is $n_{\epsilon} $ so that for all $n\geq n_{\epsilon}$ we have 
\begin{eqnarray*}
\alpha \delta _n\leq \frac{1}{2}(||\nabla ^2f(x_{\infty})||+\epsilon ) \times  (||\nabla ^2f(x_{\infty})^{-1}||+\epsilon )^2. 
\end{eqnarray*}
 \label{TheoremLearningRateRestriction}\end{theorem}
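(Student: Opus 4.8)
The plan is to treat the values $f(x_n)$ as a Lyapunov (merit) function: along the trajectory they decrease strictly, yet they can never fall below $f(x_\infty)$, and it is precisely this ``budget'' that caps the step sizes. Set $A:=\nabla^2 f(x_\infty)$ and $u_n:=f(x_n)-f(x_\infty)$. The first observation I would record is that $\{f(x_n)\}$ is strictly decreasing: Armijo's condition gives $f(x_{n+1})-f(x_n)\le -\alpha\delta_n\|\nabla f(x_n)\|^2$, which is genuinely negative since $\alpha,\delta_n>0$ and $\nabla f(x_n)\neq 0$. As $x_n\to x_\infty$ and $f$ is continuous, $f(x_n)\to f(x_\infty)$; a strictly decreasing sequence stays above its limit, so $u_n>0$ for every $n$. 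Armijo's condition itself reads $u_{n+1}\le u_n-\alpha\delta_n\|\nabla f(x_n)\|^2$.

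The second step is to extract two elementary local estimates from the fact that $f$ is $C^2$ near $x_\infty$ with $\nabla f(x_\infty)=0$. The second-order Taylor expansion at $x_\infty$ gives $u_n=\tfrac12\langle A(x_n-x_\infty),x_n-x_\infty\rangle+o(\|x_n-x_\infty\|^2)$, so, bounding the quadratic form by $\|A\|\,\|x_n-x_\infty\|^2$, we get $u_n\le\tfrac12(\|A\|+\epsilon)\|x_n-x_\infty\|^2$ once $n$ is large. The first-order expansion $\nabla f(x_n)=A(x_n-x_\infty)+o(\|x_n-x_\infty\|)$, combined with $\|Av\|\ge\|v\|/\|A^{-1}\|$, gives $\|\nabla f(x_n)\|\ge(\|A^{-1}\|+\epsilon)^{-1}\|x_n-x_\infty\|$ once $n$ is large. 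Multiplying these two, there is $n_\epsilon$ so that for all $n\ge n_\epsilon$,
\[
\|\nabla f(x_n)\|^2\ \ge\ \frac{2\,u_n}{(\|A\|+\epsilon)\,(\|A^{-1}\|+\epsilon)^2}.
\]

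Finally I would substitute this into the Armijo recursion: for $n\ge n_\epsilon$,
\[
0<u_{n+1}\ \le\ u_n-\alpha\delta_n\|\nabla f(x_n)\|^2\ \le\ u_n\Bigl(1-\frac{2\alpha\delta_n}{(\|A\|+\epsilon)(\|A^{-1}\|+\epsilon)^2}\Bigr).
\]
Dividing by $u_n>0$ forces the bracketed factor to be positive, i.e. $\alpha\delta_n<\tfrac12(\|A\|+\epsilon)(\|A^{-1}\|+\epsilon)^2$, which is the stated bound (indeed with strict inequality).

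I expect the real ``obstacle'' to be choosing this route rather than any calculation. The tempting alternative is to Taylor-expand $f(x_n-\delta_n\nabla f(x_n))-f(x_n)$ directly, obtaining $\tfrac12\delta_n\langle\nabla^2 f(\xi_n)\nabla f(x_n),\nabla f(x_n)\rangle\le(1-\alpha)\|\nabla f(x_n)\|^2$ for some $\xi_n$ on the segment $[x_n,x_{n+1}]$, and then to bound $\delta_n$ by inverting the Rayleigh quotient of the Hessian in the direction $\nabla f(x_n)$. This goes through smoothly when $x_\infty$ is a minimum but breaks at a saddle, where that Rayleigh quotient need not be bounded away from $0$; controlling it would require showing the trajectory approaches $x_\infty$ tangentially to the stable subspace of $A$, which is a genuine dynamical statement. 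The Lyapunov argument sketched above bypasses all of this and uses nothing about the signature of $A$, so it covers convergence to an arbitrary non-degenerate critical point. The single point needing care is the strict positivity $u_{n+1}>0$ — this is exactly what the hypothesis $\nabla f(x_n)\neq 0$ (equivalently, strict descent) is there to guarantee — and, more routinely, choosing one threshold $n_\epsilon$ so that both Taylor estimates hold with the same $\epsilon$ from that index on.
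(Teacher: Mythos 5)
Your proposal is correct and follows essentially the same route as the paper: bound $\alpha\delta_n\|\nabla f(x_n)\|^2$ by $f(x_n)-f(x_\infty)$ using Armijo's condition and the monotone decrease to $f(x_\infty)$, then control $f(x_n)-f(x_\infty)$ and $\|x_n-x_\infty\|$ via the two Taylor expansions at the non-degenerate critical point. The only (cosmetic) difference is that you divide by $u_n>0$ to read off the bound, whereas the paper cancels $\|\nabla f(x_n)\|^2\neq 0$ directly.
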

\begin{proof}
Fix $\epsilon >0$. We have that $\{f(x_n)\}$ decreases to $f(x_{\infty})$. Hence, by Armijo's condition we have
\begin{eqnarray*}
0\leq f(x_{n+1})-f(x_{\infty})\leq f(x_n)-f(x_{\infty}) -\alpha \delta _n||\nabla f(x_n)||^2,
\end{eqnarray*}
for all $n$. Therefore, for all $n$, we have $\alpha \delta _n||\nabla f(x_n)||^2 \leq  f(x_n)-f(x_{\infty})$. 

By Taylor's expansion for $f$ near $x_{\infty}$, using that $f$ is $C^2$  and noting that $\nabla f(x_{\infty})=0$, we have (here $o(.)$ is the small-O notation)
\begin{eqnarray*}
f(x_n)-f(x_{\infty}) &=&\frac{1}{2}<\nabla ^2f(x_{\infty})(x_n-x_{\infty}),x_n-x_{\infty}>+o(||x_n-x_{\infty}||^2)\\
&\leq& \frac{1}{2}||\nabla ^2f(x_{\infty})||\times ||x_n-x_{\infty}||^2+o(||x_n-x_{\infty}||^2).  
\end{eqnarray*}
Hence, if $n$ is large enough then $f(x_n)-f(x_{\infty})\leq \frac{1}{2}(||\nabla ^2f(x_{\infty})||+\epsilon )\times ||x_n-x_{\infty}||^2$

By Taylor's expansion for $\nabla f$ near $x_{\infty}$, using  again that $f$ is $C^2$  and noting that $\nabla f(x_{\infty})=0$, we have
\begin{eqnarray*}
\nabla f(x_n)=\nabla ^2f(x_{\infty})(x_n-x_{\infty})+o(||x_n-x_{\infty}||). 
\end{eqnarray*}
Hence, multiplying both sides with $\nabla ^2f(x_{\infty})^{-1}$,  when $n$ is large enough, we get $||x_n-x_{\infty}||\leq (||\nabla ^2f(x_{\infty})^{-1}||+\epsilon ) ||\nabla f(x_n)||$. 

Putting together all the above estimates and cancelling the term $||\nabla f(x_n)||^2$ at the end, we obtain finally: 
\begin{eqnarray*}
\alpha \delta _n\leq \frac{1}{2}(||\nabla ^2f(x_{\infty})||+\epsilon ) \times  (||\nabla ^2f(x_{\infty})^{-1}||+\epsilon )^2,
\end{eqnarray*}
for large enough values of $n$, as wanted.
\end{proof}

This result says roughly that in case of convergence to a non-degenerate critical point, then the performance of Unbounded Backtracking GD and of the usual Backtracking GD are similar. On the other hand, in case of convergence to a degenerate critical point, then the performance of the two algorithms can be sharply different. Below are some experimental results illustrating that both scenarios do happen in reality. 

The setups are as follows. We choose $\alpha =0.5$ for Armijo's condition. 

For the usual Backtracking GD, we choose $\beta =0.7$ and $\delta _0=1$. 

For Unbounded Backtracking GD: we choose $\beta =0.7$ and $\delta _0=1$ as in the usual Backtracking GD. We choose the function $h(t)=\delta _0$ if $t>1$, and $h(t)=\delta _0/\sqrt{t}$ if $t\leq 1$. For the readers' convenience, we recall here the update rule for Unbounded Backtracking GD \cite{truong}: At step $n$, we start with $\delta =\delta _0$. If $\delta $ does not satisfy Armijo's condition, then we reduce $\delta $ by $\delta \beta$ until it satisfies Armijo's condition, hence in this case we proceed as in the usual Backtracking GD. On the other hand, if $\delta $ does satisfy Armijo's condition, then we increase it by $\delta /\beta $ while both Armijo's condition and $\delta \leq h(||\nabla f(x_n)||)$ is satisfied. We choose $\delta _n$ to be the final value of $\delta$, and update $x_{n+1}=x_n-\delta _n\nabla f(x_n)$.   

We will stop when either the iterate number is $10^6$ or when the gradient of the point is $\leq 10^{-10}$. 

 {\bf Example 1:}  We look at the function $f(x,y)=x^3 sin(1/x) +y^3sin(1/y)$ and start from the initial point $z_0=(4,-5)$. After 10 steps, both algorithms Backtracking GD and Unbounded Backtracking GD arrive at the same point $(0.09325947,-0.09325947)$ which is very close to a non-degenerate local minimum of the function. 
 
 {\bf Example 2:} We look at the function $f(x,y)=x^4+y^4$ and start from the initial point $z_0=(0.1,15)$. This function has a degenerate global minimum at $(0,0)$. After $10^6$ steps, Backtracking GD arrives at the point $(0.00111797,0.00111802)$ with learning rate $1$. On the other hand, only after 89 steps, Unbounded Backtracking GD already arrives at a better point $(0.00025327, 0.00025327)$ with learning rate $90544.63441298596$ much bigger than $1$. 
 
Finally, we present a heuristic argument showing that Armijo's condition and backtracking manner of choosing learning rates could prevent a pathological scenario not covered by the convergence result in \cite{truong}. More precisely, we use the following update rule: it is like the update rule for the discrete version of Unbounded Backtracking GD mentioned above, except that we do not constrain $\delta $ by any function $h(||\nabla f(z_n)||)$. The pathological scenario is that the constructed sequence $\{z_n\}$ contains both a bounded and an unbounded subsequence, and the bounded subsequence converges to a critical point $z_{\infty}$. Since as mentioned, modifications of Backtracking GD in \cite{truong2, truong3} can avoid saddle points, we expect that with the above update rule the sequence $\{z_n\}$ can also avoid saddle points. Then the point $z_{\infty}$ is expected to be a local minimum. There is expected a small open neighbourhood $U$ of $z_{\infty}$ for which $\min _{z\in \partial U}f(z)>f(z_{\infty})$. Now, the backtracking manner of choosing learning rates is expected to have this effect: if $z\in U$ is very close to $z_{\infty}$, then the choice of $\delta (z)$ - since at most will be increased by $\beta$ at a time and must keep the value of the function not increased - will not be enough to allow the resulting point $z-\delta (z)\nabla f(z)$ to escape $U$. (Since $||\nabla f(z_n)||$ is very small, it is expected that if $\delta '$ is the largest positive number so that $z_n-\delta '\nabla f(z_n)$ stays in $U$, then the next value $\delta '/\beta$ is expected to make $z_n-\delta '\nabla f(z_n)/\beta $ stay close to $\partial U$, which will force $f(z_n-\delta '\nabla f(z_n)/\beta )>f(z_n)$ - a condition prohibited by Armijo's condition.)  Therefore, we expect that if there is a sequence $\{z_{n_j}\}$ converging to $z_{\infty}$, then the whole sequence $\{z_n\}$ must be bounded, and the above pathological scenario cannot happen. It would be good if the above heuristic argument can be realised at least for $C^2$ cost functions.

\subsection{Backtracking GD has correct units}

In \cite{zeiler} where he introduced Adadelta, Zeiler has an interesting interpretation of whether a numerical method is "right" or not, based on the idea of "correct unit". Here we show that Backtracking GD has the correct unit, thus gives more support to why it is effective. The argument is of course  non-rigorous, but we hope that this explanation can be amusing and can encourage more interest in using Backtracking GD in practical applications, in particular in Deep Learning. 

The idea is as follows. If we have an equality $LHS=RHS$, then whenever the LHS has a certain unit, then so is the RHS. For example, in the formula for velocity $v=x/t$, if the unit of $x$ is m and the unit of $t$ is s, then the unit of $v$ must be $m/s$.  Likewise, in numerical methods, if we define $x_{n+1}=x_n+\xi _n$, then the unit of $\xi _n$ must be equal that of $x_n$ and  $x_{n+1}$. 

To make the presentation simple, we will choose dimension $k=1$, and hence our map $f$ is from $\mathbb{R}$ to $\mathbb{R}$. In this case, we can write $f'(x)$ for $\nabla f(x)$. For an object $z$, we write $Unit(z)$ for its unit. Our convenience is that if a constant $\alpha$ is not bound in any relation (equality, in equality and so on), then  it is unitless.

By definition
\begin{eqnarray*} 
f'(x)=\frac{\Delta f}{\Delta x},
\end{eqnarray*}
where $\Delta$ is the difference, and for any object $z$ we have $Unit(\Delta z)=Unit (z)$. Therefore, we obtain $Unit(f')=Unit(f)/Unit(x)$. 

Similarly, $f"(x)=\Delta f'/\Delta x$ implies that $Unit(f")=Unit(f')/Unit(x)=Unit(f)/Unit(x)^2$. 

Zeiler analysed the unit correctness of some common gradient descent methods appearing before Adadelta: Standard GD, Momentum, Adagrad and Newton's method. Here we repeat the analysis for Standard GD and Newton's. 

For Standard GD, the update rule is $x_{n+1}=x_n-\delta _0f'(x_n)$. Since $\delta$ is an unbound constant, we have that $\delta _0$ is unitless. Hence, we have a mismatch because $Unit(x)^2\not= Unit(f')$ in general. This can be interpreted in that Standard GD is not the "right" method for a general $C^1$ function. Similarly, Zeiler showed that Momentum and Adagrad do not have correct units. 

For Newton's method, the update rule is $x_{n+1}-x_n=-f'(x_n)/f"(x_n)$. Here we have unit correctness because the unit of RHS is
\begin{eqnarray*}
Unit(f'/f")=[Unit(f)/Unit(x)]/[Unit(f)/Unit(x)^2]=Unit(x), 
\end{eqnarray*}
which is the same as that of LHS. One weak point of Newton's method is however that it is not guaranteed to be a descent method, that is there is no guarantee that $f(x_{n+1})\leq f(x_n)$ for all $n$. 
 Zeiler designed his algorithm Adadelta as a way to make Adagrad have correct unit. However, again this method is not guaranteed to be descent. 
 
Now we show that Backtracking GD has correct unit. In deed, we choose $\delta (x_n)$ as the largest $\delta$ among $\{\beta ^n\delta _0: ~n=0,1,2,\ldots\}$ so that Armijo's condition 
\begin{eqnarray*}
f(x-\delta f'(x))-f(x)\leq -\alpha \delta |f'(x)|^2.
\end{eqnarray*}  
 Since $x-\delta f'(x)$ appears as an argument for the function $f$, we must have $Unit(\delta f'(x))=Unit(x)$, which implies that
 \begin{eqnarray*}
 Unit(\delta )=Unit(x)/Unit( f'(x))=Unit(x)/[Unit(f)/Unit(x)]=Unit(x)^2/Unit(f). 
 \end{eqnarray*} 
For Armijo's condition to have correct unit, the necessary and sufficient condition is then that $\alpha$ is unitless. Likely, we check that $\beta$ is unitless, and $Unit(\delta _0)=Unit(\delta (x))=Unit(x)^2/Unit(f)$.

Likewise, we can now check that in case $\nabla f$ is Lipschitz continuous with Lipschitz constant $L$, then the Standard GD update with learning rate $\delta _0~1/L$ has correct unit. To see this, we first observe that since the constant $L$ is bound in the inequality $|f'(x)-f'(y)|\leq L|x-y|$, it follows that 
\begin{eqnarray*}
Unit(L)=Unit(f')/Unit(x)=Unit(f)/Unit(x)^2,
\end{eqnarray*}
and hence the update rule $\Delta x_n=-\delta _0f'(x_n)$ has correct unit. We can see this fact also by observing that in this case the Standard GD is a special case of the Backtracking GD, and hence also has correct unit. For example, if we choose the learning rate to be too much bigger than $1/L$, then the sequence may diverge to $\infty$, that is the update rule is not "right". If we instead choose the learning rate to too much smaller than $1/L$, then convergence can be guaranteed but the limit point may not be a critical point of $f$. 

On the other hand, for Diminishing GD, where we pre-choose a sequence $\delta _n$ so that $\lim _{n\rightarrow\infty}\delta _n=0$ and $\sum _{n}\delta _n=\infty$, independent of functions $f$, then it is clear that $\delta _n$'s are unitless. Then the update rule for Diminishing GD does not have "correct unit".

\subsection{Acknowledgments} We thank anonymous comments for inspiring our study in Section 0.1. The first author is  supported by Young Research Talents grant 300814 from Research Council of Norway.

\end{document}